\theoremstyle{plain} 
\newtheorem{thm}{Theorem}[section]
\newtheorem{prop}[thm]{Proposition}
\newtheorem{cor}[thm]{Corollary}
\newtheorem{lemma}[thm]{Lemma}
\theoremstyle{definition}
\newtheorem*{acknow}{Acknowledgement}
\numberwithin{equation}{section}
\newcommand{\ol}[1]{\overline{#1}}
\renewcommand{\epsilon}{\varepsilon}
\renewcommand{\phi}{\varphi}
\begin{document}

\title{The Magnus Property for direct products}

\author{Benjamin Klopsch} \address{Mathematisches Institut,
  Heinrich-Heine-Universit\"at D\"usseldorf, 40225 D\"usseldorf, Germany}
\email{klopsch@math.uni-duesseldorf.de}

\author{Benno Kuckuck} \address{Mathematisches Institut,
  Heinrich-Heine-Universit\"at D\"usseldorf, 40225 D\"usseldorf, Germany}
\email{Benno.Kuckuck@uni-duesseldorf.de}

\keywords{Magnus property, direct products, torsion-free groups,
  crystallographic groups, Hantzsche-Wendt group.}

\subjclass[2010]{Primary 20E45; Secondary 20E26, 20H15}

\maketitle

\begin{abstract}
  A group $G$ is said to have the Magnus property if the following
  holds: whenever two elements $x,y$ have the same normal
  closure, then $x$ is conjugate to $y$ or to $y^{-1}$.  We prove: Let
  $p$ be an odd prime, and let $G,H$ be residually finite-$p$ groups
  with the Magnus property.  Then the direct product $G \times H$ has
  the Magnus property.

  By considering suitable crystallographic groups, we give an explicit
  example of finitely generated, torsion-free, residually finite groups
  $G,H$ with the Magnus property such that the direct product
  $G \times H$ does not have the Magnus property.
\end{abstract}


\section{Introduction}

A group $G$ is said to satisfy the Magnus property if the following
holds: whenever $x,y \in G$ have the same normal closure
$\langle x \rangle^G = \langle y \rangle^G$ then $x$ is conjugate to
$y$ or to $y^{-1}$ in~$G$.  A classical theorem of Magnus~\cite{Ma30}
shows that free groups possess the Magnus property.  More recently,
Bogopolski~\cite{Bo05}, Bogopolski and Sviridov~\cite{BoSv08} and
Feldkamp~\cite{Fe} established or disproved the Magnus property for
certain one-relator groups, using group-theoretic techniques.  In
particular, the fundamental groups of closed surfaces possess the
Magnus property.  For orientable surface groups of genus at least $2$
and non-orientable surface groups of genus at least $4$, this result
can also be obtained as a consequence of the fact that the Magnus
property holds in any elementary free group, that is in any group
having the same first-order theory as non-abelian free groups.

For a more systematic study of the Magnus property it is of interest
to find out under what conditions the property is preserved under
standard group constructions, such as free or direct products.
In~\cite{BoSv08} it is suggested that the Magnus property is
unconditionally preserved under direct products.  At a rather basic
level, finite cyclic groups show that this is in fact not the case;
see Corollary~\ref{cor:not-preserved-cyclic}.  Nevertheless, with
comparatively little effort we establish the following theorem.

\begin{thm} \label{thm:main}
  Let $p$ be an odd prime, and let $G, H$ be residually
  finite-$p$ groups.  If $G$ and $H$ have the Magnus property then the
  direct product $G \times H$ has the Magnus property.
\end{thm}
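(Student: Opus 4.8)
The plan is to reduce the Magnus property for $G \times H$ to the Magnus property of the two factors, the only genuine difficulty being a possible ``mismatch of signs'', which I will eliminate by passing to a finite $p$-quotient and exploiting that $2$ is invertible modulo the odd prime $p$. First I would record how normal closures and conjugacy behave under the projections $\pi_G, \pi_H$. Suppose $(a,b),(c,d) \in G \times H$ have the same normal closure $N = \langle (a,b) \rangle^{G \times H} = \langle (c,d) \rangle^{G \times H}$. Projecting gives $\langle a \rangle^G = \langle c \rangle^G$ and $\langle b \rangle^H = \langle d \rangle^H$, so by the Magnus property in the factors $a$ is conjugate to $c$ or to $c^{-1}$, and $b$ is conjugate to $d$ or to $d^{-1}$. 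Since conjugacy in a direct product is componentwise, $(a,b)$ is conjugate to $(c,d)$ or to $(c,d)^{-1}$ precisely when the two sign choices can be made to agree. Thus the theorem holds unless we are in a genuinely mixed situation; after conjugating $(a,b)$ by a suitable element and using the Magnus conjugacies in each factor, I may normalise this to the assertion $(c,d^{-1}) \in N = \langle (c,d) \rangle^{G \times H}$ together with $c \not\sim c^{-1}$ in $G$ and $d \not\sim d^{-1}$ in $H$ (the symmetric case $(c^{-1},d) \in N$ being identical).

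The heart of the argument is then the claim: \emph{if $c \neq 1$ and $d \neq 1$, then $(c,d^{-1}) \notin \langle (c,d) \rangle^{G \times H}$}. To prove it I would use residual finiteness-$p$ to choose finite $p$-group quotients $\bar G$ of $G$ and $\bar H$ of $H$ in which the images $\tilde c, \tilde d$ of $c,d$ are still nontrivial; then $P = \bar G \times \bar H$ is a finite $p$-group and $(\tilde c, \tilde d^{-1})$ lies in $\tilde N = \langle (\tilde c, \tilde d) \rangle^{P}$. Writing $\tilde A = \langle \tilde c \rangle^{\bar G}$ and passing to the coinvariants $\hat A = \tilde A / [\bar G, \tilde A]$, the nilpotency of the finite $p$-group $\bar G$ forces $[\bar G, \tilde A] \subsetneq \tilde A$, so $\hat A$ is a nontrivial cyclic $p$-group generated by the image of $\tilde c$; likewise $\hat B = \tilde B/[\bar H,\tilde B]$ is generated by the image of $\tilde d$. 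Modulo $[\bar G,\tilde A] \times [\bar H,\tilde B]$ all $P$-conjugates of $(\tilde c,\tilde d)$ collapse to the single element $(\bar{\tilde c}, \bar{\tilde d})$, so the image of $\tilde N$ in the abelian group $\hat A \times \hat B$ is the cyclic subgroup generated by $(\bar{\tilde c}, \bar{\tilde d})$.

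Now I would extract an arithmetic contradiction. Writing $\hat A \times \hat B$ additively, the containment $(\tilde c,\tilde d^{-1}) \in \tilde N$ gives $(\bar{\tilde c}, -\bar{\tilde d}) = k\,(\bar{\tilde c}, \bar{\tilde d})$ for some integer $k$; comparing the two components yields $k \equiv 1$ modulo the order of $\bar{\tilde c}$ and $k \equiv -1$ modulo the order of $\bar{\tilde d}$. As both orders are nontrivial powers of $p$, reducing modulo $p$ forces $1 \equiv -1$, i.e.\ $2 \equiv 0 \pmod{p}$, contradicting that $p$ is odd. This proves the claim, and with it the mixed situation cannot occur: the containment $(c,d^{-1}) \in N$ would force $c = 1$ or $d = 1$, either of which contradicts $c \not\sim c^{-1}$, respectively $d \not\sim d^{-1}$, since a trivial element is conjugate to its inverse. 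Hence the sign choices always match and $(a,b)$ is conjugate to $(c,d)$ or to $(c,d)^{-1}$.

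The step I expect to be the main obstacle, and the only place where the hypotheses (odd $p$ and residual finiteness-$p$ with the \emph{same} prime for both factors) are genuinely used, is establishing the displayed claim, that is, controlling $\langle (c,d) \rangle^{G \times H}$ well enough to detect the mismatch. Collapsing to a single finite $p$-quotient and then to the abelian coinvariants is exactly what makes ``$2$ invertible mod $p$'' usable, and the nilpotency of finite $p$-groups is what guarantees that the coinvariant images of $\tilde c$ and $\tilde d$ survive with $p$-power order greater than $1$; this is precisely what converts the sign mismatch into the impossible congruence $2 \equiv 0 \pmod{p}$. Using the same prime $p$ for $G$ and $H$ is essential here, since it is what makes $\bar G \times \bar H$ a $p$-group rather than a product of coprime pieces, in which a single element could generate the whole coinvariant group and no contradiction would arise.
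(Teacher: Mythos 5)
Your proof is correct and takes essentially the same approach as the paper: reduce, via the Magnus property of the factors and componentwise conjugacy, to the sign-mismatch case, then rule that case out by passing to finite $p$-quotients, collapsing the normal closure to a nontrivial cyclic $p$-group of coinvariants, and extracting the impossible congruence $1 \equiv -1 \pmod{p}$. The paper merely packages these same ideas into separate statements (a criterion for direct products, the computation $\langle xy \rangle \neq \langle xy^{-1} \rangle$ in $C_m \times C_n$ for $\gcd(m,n) \notin \{1,2\}$, and a central-quotient lemma via the lower central series, which plays the role of your nilpotency argument that $[\bar G, \tilde A] \subsetneq \tilde A$).
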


Denoting by
$\mathrm{Dr}_{i \in I} G_i = \{ (g_i) \in \prod_{i \in I} G_i \mid g_i
=1 \text{ for almost all } i \}$
the direct product of a family of groups $(G_i)_{i \in I}$, we record
as a sample the following consequence.
 
\begin{cor} \label{cor:direct-sum} Any direct product
  $\mathrm{Dr}_{i \in I} F_i$ of free groups has the Magnus property.
\end{cor}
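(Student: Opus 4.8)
The plan is to bootstrap directly from Theorem~\ref{thm:main}, using two classical facts about free groups: by Magnus's theorem they enjoy the Magnus property, and every free group is residually finite-$p$ for each prime $p$. Fixing an odd prime $p$, these facts let me feed free groups straight into the main theorem.

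First I would settle the case of a finite product $F_1 \times \cdots \times F_n$ by induction on $n$. The base case $n=1$ is Magnus's theorem. For the inductive step I note that a finite direct product of residually finite-$p$ groups is again residually finite-$p$, so $F_1 \times \cdots \times F_{n-1}$ is residually finite-$p$ and has the Magnus property by the inductive hypothesis; applying Theorem~\ref{thm:main} with $G = F_1 \times \cdots \times F_{n-1}$ and $H = F_n$ then yields the Magnus property for $F_1 \times \cdots \times F_n$.

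To pass to an arbitrary restricted direct product $G = \mathrm{Dr}_{i \in I} F_i$, I would reduce to finite subproducts by an observation about supports and normal closures. Suppose $x, y \in G$ satisfy $\langle x \rangle^G = \langle y \rangle^G$. Since every element of $G$ has finite support, there is a finite subset $J \subseteq I$ with $x, y \in G_J := \mathrm{Dr}_{i \in J} F_i$, viewed as the factor $G_J \times 1$ in the internal decomposition $G = G_J \times \mathrm{Dr}_{i \in I \setminus J} F_i$. A direct computation shows that the normal closure of an element lying in one direct factor is the product of its normal closure within that factor and the trivial subgroup of the complementary factor; hence $\langle x \rangle^G = \langle x \rangle^{G_J} \times 1$, and likewise for $y$. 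It follows that $\langle x \rangle^{G_J} = \langle y \rangle^{G_J}$ in $G_J$.

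Now $G_J$ is a finite product of free groups, so by the first step it has the Magnus property; thus $x$ is conjugate in $G_J$ to $y$ or to $y^{-1}$, and, since $G_J$ sits inside $G$ as a direct factor, the same conjugacy holds in $G$. This establishes the Magnus property for $G$. I expect the only point demanding genuine care to be the verification that normal closures behave multiplicatively across the decomposition $G = G_J \times \mathrm{Dr}_{i \in I \setminus J} F_i$, which rests on the fact that conjugation in a direct product acts coordinatewise; everything else is a routine assembly of Theorem~\ref{thm:main} with the standard residual properties of free groups.
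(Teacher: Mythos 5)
Your proposal is correct and takes essentially the same route as the paper: Magnus's theorem plus induction on Theorem~\ref{thm:main} (using that free groups are residually finite-$p$ and that this property passes to finite direct products) handles finitely many factors, and the general case is reduced to a finite subproduct containing both elements, which is a direct factor so that normal closures and conjugacy are unaffected. The paper's own proof is merely a terser version of this argument, leaving implicit the compatibility of normal closures that you verify explicitly.
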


Unsuccessful attempts to extend the above result to torsion-free
groups led us to the discovery of an instructive `counterexample',
illustrating that Theorem~\ref{thm:main} is in some sense already
close to optimal.  First we establish that the well-known group
\[
G = \langle x,y \mid x^{-1} y^2 x = y^{-2}, y^{-1} x^2 y = x^{-2}
\rangle \cong (C_2 \times C_2).\mathbb{Z}^3
\]
has the Magnus property; see Proposition~\ref{pro:G-group}.  This
group is sometimes called the Passman fours group or the
Hantzsche-Wendt group; it is a centreless torsion-free
$3$\nobreakdash-dimensional crystallographic group with holonomy group
$C_2 \times C_2$.

The group $G$ allows for various generalisations; e.g.,
see~\cite{HiSa86,GuSi99}.  We establish that a natural variant
$H \cong (C_3 \times C_3).\mathbb{Z}^8$, a centreless torsion-free
$8$-dimensional crystallographic group with holonomy group
$C_3 \times C_3$, also has the Magnus property; see
Proposition~\ref{pro:H-group}.  Finally, in
Proposition~\ref{pro:G-H-product}, we establish that $G \times H$
fails to have the Magnus property.  This leads to the following
consequence.

\begin{prop} \label{pro:not-preseverved-torsion-free}
  The Magnus property is not preserved under direct products within the
  class of finitely generated, torsion-free, residually finite groups.
\end{prop}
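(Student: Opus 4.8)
The plan is to assemble the counterexample out of the three crystallographic groups already singled out in the discussion above. Concretely, I would take the Hantzsche--Wendt group $G \cong (C_2 \times C_2).\mathbb{Z}^3$ together with its variant $H \cong (C_3 \times C_3).\mathbb{Z}^8$, and argue that the pair $(G,H)$ witnesses the failure of preservation within the stated class. The argument then splits into two parts: a verification that $G$, $H$ and $G \times H$ all belong to the class of finitely generated, torsion-free, residually finite groups, followed by an appeal to the Magnus-property statements recorded as Propositions~\ref{pro:G-group}, \ref{pro:H-group} and~\ref{pro:G-H-product}.

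First I would settle the class membership. Each of $G$ and $H$ is an $n$-dimensional crystallographic group, hence fits into a short exact sequence $1 \to \mathbb{Z}^n \to G \to Q \to 1$ with $Q$ finite; this exhibits $G$ as a finite extension of a finitely generated group, so $G$ is finitely generated, and likewise for $H$. Both groups are torsion-free, as was already noted when they were introduced (they are Bieberbach groups, being centreless torsion-free crystallographic groups). For residual finiteness I would use that a crystallographic group is linear over $\mathbb{Z}$: the inclusion $\mathbb{Z}^n \rtimes \mathrm{GL}_n(\mathbb{Z}) \le \mathrm{GL}_{n+1}(\mathbb{Z})$ embeds $G$ as a finitely generated linear group, so Mal'cev's theorem applies; alternatively one may simply invoke that a finitely generated abelian-by-finite group is polycyclic and therefore residually finite. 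Finally, the class of finitely generated, torsion-free, residually finite groups is closed under finite direct products, so $G \times H$ lies in it as well.

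With the class membership in place the conclusion is immediate. Proposition~\ref{pro:G-group} yields the Magnus property for $G$, Proposition~\ref{pro:H-group} yields it for $H$, while Proposition~\ref{pro:G-H-product} shows that $G \times H$ fails the Magnus property. Thus $G$ and $H$ are two groups in the prescribed class, each enjoying the Magnus property, whose direct product does not, which is exactly the assertion of Proposition~\ref{pro:not-preseverved-torsion-free}.

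The genuinely substantive content is deferred to the three cited propositions; within the present proof the only thing requiring any care is residual finiteness, and even that is standard for crystallographic groups. I therefore do not anticipate a real obstacle at this final stage beyond correctly recording the structural facts about $G$ and $H$ and observing that the class is closed under direct products.
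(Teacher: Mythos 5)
Your proposal is correct and follows exactly the route the paper takes: the paper's entire proof is the one-line observation that Propositions~\ref{pro:G-group}, \ref{pro:H-group} and \ref{pro:G-H-product} together imply the statement, with the groups $G$ and $H$ serving as the witnesses. Your additional verification of class membership (finite generation, torsion-freeness, and residual finiteness via linearity or the polycyclic-by-finite structure of crystallographic groups) is accurate and merely makes explicit what the paper leaves implicit.
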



\section{Proofs of Theorem~\ref{thm:main} and Corollary~\ref{cor:direct-sum}}

The number of generators of a finite cyclic group $C_n$ of order $n$
is equal to $\phi(n)$, where $\phi$ denotes the Euler function.
Therefore $C_n$ has the Magnus property if and only if
$\phi(n) \leq 2$.  This yields the following proposition, and the
concrete example $C_{12}\cong C_4\times C_3$ gives rise to an
interesting corollary.

\begin{prop} For $n \in \mathbb{N}$, the cyclic group $C_n$ has the
  Magnus property if and only if $n \in\{1,2,3,4,6\}$.
\end{prop}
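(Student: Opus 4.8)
The plan is to reduce the statement to a purely number-theoretic assertion and then verify it by bounding the Euler totient factor by factor. Since the preceding discussion shows that $C_n$ has the Magnus property precisely when $\phi(n) \leq 2$, it suffices to prove that $\{n \in \mathbb{N} : \phi(n) \leq 2\} = \{1,2,3,4,6\}$.

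First I would recall the standard facts that $\phi$ is multiplicative and that $\phi(p^a) = p^{a-1}(p-1)$ for a prime $p$ and exponent $a \geq 1$. Writing $n = \prod_i p_i^{a_i}$ for the prime factorisation of $n$, this yields $\phi(n) = \prod_i p_i^{a_i-1}(p_i-1)$, so that $\phi(n) \leq 2$ forces every factor $p_i^{a_i-1}(p_i-1)$ to be at most $2$.

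Next I would analyse these factors prime by prime. A prime $p \geq 5$ already contributes $p-1 \geq 4$, so no such prime may divide $n$. For $p = 3$ the factor equals $3^{a-1} \cdot 2$, which exceeds $2$ as soon as $a \geq 2$, so $3$ divides $n$ at most once. For $p = 2$ the factor equals $2^{a-1}$, which exceeds $2$ once $a \geq 3$, so $4 \mid n$ is permitted but $8$ cannot divide $n$. Consequently every admissible $n$ has the shape $n = 2^a 3^b$ with $a \in \{0,1,2\}$ and $b \in \{0,1\}$.

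Finally I would run through these six candidates and evaluate $\phi$ in each, recovering exactly $\{1,2,3,4,6\}$ and rejecting $n = 12$. The one point needing a little care---and the nearest thing to an obstacle---is that the per-prime bounds are necessary but not sufficient: although each prime-power factor of $12 = 2^2 \cdot 3$ is individually at most $2$, their product is $\phi(12) = 4 > 2$, so $12$ must be discarded. Verifying the product for all six candidates then completes the argument.
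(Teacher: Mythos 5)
Your proposal is correct and follows essentially the same route as the paper: the paper reduces the statement to the observation that $C_n$ has the Magnus property if and only if $\phi(n) \leq 2$ (stated in the sentence just before the proposition) and treats the enumeration $\{n : \phi(n)\leq 2\} = \{1,2,3,4,6\}$ as immediate. You simply spell out that elementary totient computation, including the correct caveat that the per-prime bounds alone do not suffice (witness $n=12$), which is a fine, if more detailed, rendering of the same argument.
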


\begin{cor} \label{cor:not-preserved-cyclic}
  The Magnus property is not preserved under direct products.
\end{cor}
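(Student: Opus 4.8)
The plan is to read the statement straight off the preceding proposition, exploiting the coprime factorisation $C_{12} \cong C_4 \times C_3$. First I would record that, since $3, 4 \in \{1,2,3,4,6\}$, both cyclic groups $C_3$ and $C_4$ enjoy the Magnus property by the proposition. Next, because $\gcd(4,3) = 1$, the Chinese Remainder Theorem yields the isomorphism $C_4 \times C_3 \cong C_{12}$. Finally, since $\phi(12) = \phi(4)\phi(3) = 4$, we have $12 \notin \{1,2,3,4,6\}$, so the proposition tells us that $C_{12}$, and hence the direct product $C_4 \times C_3$, does not have the Magnus property. This exhibits two groups with the Magnus property whose direct product lacks it, which is exactly the assertion of the corollary.

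It seems worthwhile to make the failure explicit rather than merely quoting the count $\phi(12) = 4$. Writing $C_{12} = \langle g \rangle$, the elements $g$ and $g^5$ are both generators, so they share the same normal closure $\langle g \rangle^{C_{12}} = \langle g \rangle = \langle g^5 \rangle$; yet, as the group is abelian, conjugacy collapses to equality, and $g$ equals neither $g^5$ nor $(g^5)^{-1} = g^7$. This is precisely a violation of the Magnus property, witnessed inside the direct product $C_4 \times C_3$.

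As for the difficulty, there is essentially no obstacle at this stage: all the mathematical content has already been absorbed into the preceding proposition, whose proof in turn rests on the elementary facts that the generators of $C_n$ number $\phi(n)$ and that $d \mid n$ forces $\phi(d) \mid \phi(n)$ (so that $\phi(n) \le 2$ propagates to every cyclic subgroup). The corollary itself is then a one-line specialisation to the smallest convenient coprime pair.
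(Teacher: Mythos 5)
Your proposal is correct and takes exactly the paper's route: the paper also derives the corollary from the decomposition $C_{12}\cong C_4\times C_3$ together with the preceding proposition, which certifies that $C_3$ and $C_4$ have the Magnus property while $C_{12}$ (with $\phi(12)=4$) does not. Your explicit witness $g$, $g^5$ in $C_{12}$ is a sound elaboration of the same argument, not a different one.
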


In the next section we will show that even
within the class of finitely generated, torsion-free, residually
finite groups the Magnus property is not preserved under direct
products.  Though this will require somewhat more effort,
our construction will be guided by the failure of the elementary example
above.

Now we concentrate on establishing conditions 
which ensure that the Magnus property holds for a direct product.

\begin{lemma} \label{lem:i-ii-iii}
  Let $G$ and $H$ be groups with the Magnus property.  Then
  $G \times H$ has the Magnus property if and only if, for all
  $(g,h) \in G \times H$, one of the following holds:
  \begin{enumerate}
  \item[(i)] $g$ is conjugate to $g^{-1}$ in $G$; 
  \item[(ii)] $h$ is conjugate to $h^{-1}$ in $H$;
  \item[(iii)]
    $\langle (g,h) \rangle^{G \times H} \neq \langle (g,h^{-1})
    \rangle^{G \times H}$.
  \end{enumerate}
\end{lemma}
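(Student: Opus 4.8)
The plan is to exploit two elementary features of the direct product. First, conjugacy is componentwise: $(x_1,y_1)$ is conjugate to $(x_2,y_2)$ in $G \times H$ if and only if $x_1 \sim x_2$ in $G$ and $y_1 \sim y_2$ in $H$. Second, the normal closure of $(x,y)$ projects onto the normal closures of its components: for the coordinate projection $\pi \colon G \times H \to G$, being a surjective homomorphism, one has $\pi\bigl(\langle (x,y) \rangle^{G \times H}\bigr) = \langle x \rangle^{G}$, and symmetrically for $H$. In particular, if $(x_1,y_1)$ and $(x_2,y_2)$ have the same normal closure in $G \times H$, then applying both projections yields $\langle x_1 \rangle^{G} = \langle x_2 \rangle^{G}$ and $\langle y_1 \rangle^{H} = \langle y_2 \rangle^{H}$.

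For the implication that the Magnus property of $G \times H$ forces (i)--(iii), I would argue by contraposition. Suppose some $(g,h)$ violates all three conditions. Failure of (iii) means $(g,h)$ and $(g,h^{-1})$ share a normal closure, so by the Magnus property of $G \times H$ the element $(g,h)$ is conjugate to $(g,h^{-1})$ or to $(g,h^{-1})^{-1} = (g^{-1},h)$. Reading this componentwise, the first alternative gives $h \sim h^{-1}$ in $H$ and the second gives $g \sim g^{-1}$ in $G$, contradicting the failure of (ii) or (i) respectively.

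For the converse, assume (i)--(iii) hold for every pair and suppose $(x_1,y_1)$ and $(x_2,y_2)$ have equal normal closures. By the projection remark together with the Magnus property in each factor, $x_1$ is conjugate to $x_2$ or to $x_2^{-1}$, and $y_1$ is conjugate to $y_2$ or to $y_2^{-1}$. Two of the four resulting combinations, namely $x_1 \sim x_2,\, y_1 \sim y_2$ and $x_1 \sim x_2^{-1},\, y_1 \sim y_2^{-1}$, immediately give $(x_1,y_1) \sim (x_2,y_2)^{\pm 1}$ as required. The crux, and the step I expect to be the main obstacle, is to dispose of the two mixed combinations, say $x_1 \sim x_2$ and $y_1 \sim y_2^{-1}$. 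Here I would invoke the hypothesis for $(g,h) = (x_2,y_2)$: if (i) holds then $x_2 \sim x_2^{-1}$ upgrades the combination to the good case $x_1 \sim x_2^{-1},\, y_1 \sim y_2^{-1}$, and symmetrically (ii) yields the good case $x_1 \sim x_2,\, y_1 \sim y_2$. If instead (iii) holds, I would derive a contradiction: from $x_1 \sim x_2$ and $y_1 \sim y_2^{-1}$ we get $(x_1,y_1) \sim (x_2,y_2^{-1})$, whence $\langle (x_2,y_2^{-1}) \rangle^{G \times H} = \langle (x_1,y_1) \rangle^{G \times H} = \langle (x_2,y_2) \rangle^{G \times H}$, contradicting (iii). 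The remaining mixed combination $x_1 \sim x_2^{-1},\, y_1 \sim y_2$ is handled by the identical argument, completing the proof.
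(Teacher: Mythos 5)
Your proof is correct and follows essentially the same route as the paper: both directions rest on componentwise conjugacy in $G \times H$, the fact that the coordinate projections carry normal closures to normal closures, and the Magnus property of the factors. The paper merely phrases the converse contrapositively, reducing a putative counterexample to the pair $(g,h)$, $(g,h^{-1})$ by the same conjugations and inversions that appear in your four-case analysis, so the difference is purely organizational.
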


\begin{proof}
  First suppose that $G \times H$ has the Magnus property, and let
  $(g,h) \in G \times H$ be such that (iii) fails, i.e.,
  $\langle (g,h) \rangle^{G \times H} = \langle (g,h^{-1}) \rangle^{G
    \times H}$.  Then $(g,h)$ is conjugate to $(g,h^{-1})$ or to
  $(g^{-1},h)$ in $G \times H$.  Consequently, at least one of (i), (ii) holds.

  Conversely, suppose that $G\times H$ does not have the Magnus
  property.  Hence there are elements $(g,h),(g',h') \in G\times H$
  with
  $\langle (g,h) \rangle^{G \times H} = \langle (g',h') \rangle^{G
    \times H}$,
  but which are not conjugate to each other or their respective
  inverses.  Projecting onto the first factor, we see that
  $\langle g \rangle^G = \langle g' \rangle^G$.  Since $G$ has the
  Magnus property, we may conjugate $(g',h')$ by an element of
  $G \times 1$ and pass to the inverse, if necessary, to reduce to the
  case that $g' = g$.  Projecting onto the second factor and using the
  Magnus property for $H$ in a similar way, we may assume that
  $(g',h') = (g,h^{-1})$.  Then $(g,h)$ is not conjugate to
  $(g,h^{-1})$ or $(g^{-1},h)$ in $G \times H$.  Consequently, (i),
  (ii) and (iii) fail simultaneously.
\end{proof}

\begin{lemma} \label{lem:Cm-Cn}
  Let $G = \langle x, y \mid x^m=y^n=[x,y]=1 \rangle \cong C_m \times
  C_n$ be the direct product of finite cyclic groups of orders $m,n$.
  Then $\langle xy \rangle = \langle xy^{-1} \rangle$ if and only if
  $\gcd(m,n) \in \{1,2\}$.
\end{lemma}

\begin{proof}
  Finite abelian groups are direct products of their
  Sylow $p$-subgroups.  Hence it suffices to prove the result for
  powers $m=p^k$ and $n=p^l$ of a prime~$p$.  For
  $\gcd(m,n) \in \{1,2\}$, we can now conclude $x = x^{-1}$ or
  $y = y^{-1}$, and hence
  $\langle xy \rangle = \langle xy^{-1} \rangle$.  Finally, suppose
  that $p \geq 3$ and $k,l \geq 1$, or respectively $p=2$ and
  $k,l \geq 2$.  Projecting onto $C_p \times C_p$, respectively
  $C_4 \times C_4$, we see that
  $\langle xy \rangle \neq \langle xy^{-1} \rangle$.
\end{proof}

\begin{prop} \label{pro:hinreichendes-Kriterium}
  Let $G$ and $H$ be groups with the Magnus property.  Then
  $G \times H$ has the Magnus property if, for all
  $(g,h) \in G \times H$ such that $g$ is not conjugate to $g^{-1}$ in $G$ 
 and $h$ is not conjugate to $h^{-1}$ in $H$, there exist $m,n \in
 \mathbb{N}$ with 
  \begin{enumerate}
  \item[(a)] $\gcd(m,n) \not \in \{1,2\}$;
  \item[(b)] the cyclic group $\langle g \rangle^G / \langle [g,x] \mid x \in G
    \rangle^G$ projects onto $C_m$;
  \item[(c)] the cyclic group $\langle h \rangle^H / \langle [h,y] \mid y \in H
    \rangle^H$ projects onto $C_n$.
  \end{enumerate}  
\end{prop}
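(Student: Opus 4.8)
The plan is to apply the criterion of Lemma~\ref{lem:i-ii-iii}. Since $G$ and $H$ have the Magnus property, it suffices to check that for every $(g,h) \in G \times H$ at least one of the conditions (i), (ii), (iii) holds. If $g$ is conjugate to $g^{-1}$ in $G$ then (i) holds, and if $h$ is conjugate to $h^{-1}$ in $H$ then (ii) holds; so I may assume that neither occurs. This is precisely the situation in which the hypotheses supply $m,n \in \mathbb{N}$ satisfying (a)--(c), and in this case I would prove that (iii) holds, i.e.\ that $\langle (g,h)\rangle^{G\times H} \neq \langle (g,h^{-1})\rangle^{G\times H}$.

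Write $K_G = \langle [g,x] \mid x \in G\rangle^G \trianglelefteq G$ and $K_H = \langle [h,y] \mid y \in H\rangle^H \trianglelefteq H$. The key observation is that in the quotient $G/K_G$ the image $\bar g$ of $g$ is central, because every generator $[g,x]$ of $K_G$ lies in $K_G$, so $[\bar g,\bar x]=1$ for all $x \in G$; likewise $\bar h$ is central in $H/K_H$. Setting $Q = (G\times H)/(K_G\times K_H) \cong (G/K_G)\times(H/K_H)$, it follows that the image of $(g,h)$, and equally that of $(g,h^{-1})$, is central in $Q$, whence their normal closures in $Q$ coincide with the (abelian) cyclic subgroups they generate. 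Noting that $K_G \leq \langle g\rangle^G$, these cyclic subgroups both lie inside $C \times D$, where $C = \langle g\rangle^G/K_G = \langle \bar g\rangle$ and $D = \langle h\rangle^H/K_H = \langle \bar h\rangle$ are exactly the cyclic groups appearing in (b) and (c).

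Next I would use (b) and (c) to assemble a surjection $\phi = \phi_C \times \phi_D \colon C\times D \to C_m\times C_n$, where $\phi_C$ sends the generator $\bar g$ to a generator $x$ of $C_m$ and $\phi_D$ sends $\bar h$ to a generator $y$ of $C_n$. Then $\phi$ carries $\langle(\bar g,\bar h)\rangle$ onto $\langle xy\rangle$ and $\langle(\bar g,\bar h^{-1})\rangle$ onto $\langle xy^{-1}\rangle$. By hypothesis (a) we have $\gcd(m,n)\notin\{1,2\}$, so Lemma~\ref{lem:Cm-Cn} yields $\langle xy\rangle \neq \langle xy^{-1}\rangle$. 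Since these are the respective images under $\phi$, the subgroups $\langle(\bar g,\bar h)\rangle$ and $\langle(\bar g,\bar h^{-1})\rangle$ are already distinct in $C\times D$, and hence their normal closures in $Q$ are distinct.

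Finally I would pull back along the quotient map $q\colon G\times H \to Q$. As $q$ is surjective, the image of a normal closure is the normal closure of the image, so $q\bigl(\langle(g,h)\rangle^{G\times H}\bigr)$ and $q\bigl(\langle(g,h^{-1})\rangle^{G\times H}\bigr)$ are exactly the two distinct normal closures found in $Q$. Were the two normal closures in $G \times H$ equal, their images under $q$ would coincide, a contradiction; hence $\langle(g,h)\rangle^{G\times H} \neq \langle(g,h^{-1})\rangle^{G\times H}$, which is (iii). I do not anticipate a serious obstacle here: the one point needing care is the centrality reduction, ensuring that passing to $Q$ does not inadvertently merge the two normal closures, after which the conclusion is a direct invocation of Lemma~\ref{lem:Cm-Cn}.
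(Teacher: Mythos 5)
Your proposal is correct and takes essentially the same approach as the paper: the paper's own proof is the single remark that the proposition is a direct consequence of Lemmata~\ref{lem:i-ii-iii} and \ref{lem:Cm-Cn}, and your argument is exactly the deduction being alluded to. The details you supply --- reducing via Lemma~\ref{lem:i-ii-iii} to verifying (iii), passing to $Q=(G\times H)/(K_G\times K_H)$ where the images of $(g,h)$ and $(g,h^{-1})$ are central, and applying Lemma~\ref{lem:Cm-Cn} to their images in $C_m\times C_n$ --- are all sound.
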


\begin{proof}
  The result is a direct consequence of Lemmata~\ref{lem:i-ii-iii} and
  \ref{lem:Cm-Cn}.
\end{proof}

\begin{lemma} \label{lem:central-quotient}
  Let $G$ be a residually finite-$p$ group, and
  $x \in G\setminus\{1\}$.  Then there exists a finite $p$-group
  quotient $\ol G = G/N$ such that $\ol x = xN$ is non-trivial and
  central in~$\ol G$.
\end{lemma}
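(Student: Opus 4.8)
The plan is to combine two standard facts: residual finite-$p$-ness lets us separate $x$ from the identity in a finite $p$-group, and finite $p$-groups are nilpotent, so their upper central series climbs from the trivial subgroup all the way up to the whole group in finitely many steps. So first I would realise $x$ nontrivially in a finite $p$-group, and then quotient out just enough of the upper central series to push the image of $x$ into the centre. For the first step, the definition of residual finite-$p$ applied to $x \neq 1$ yields a normal subgroup $N_0 \trianglelefteq G$ with $Q = G/N_0$ a finite $p$-group and $\tilde x = xN_0 \neq 1$.

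For the second step I would use that $Q$, being a finite $p$-group, is nilpotent and thus has an upper central series $1 = Z_0 \leq Z_1 \leq \dots \leq Z_c = Q$ with $Z_{i+1}/Z_i = Z(Q/Z_i)$. Since $\tilde x \neq 1$, there is a least index $k \geq 1$ with $\tilde x \in Z_k$, and by minimality $\tilde x \notin Z_{k-1}$. Letting $N \trianglelefteq G$ be the preimage of $Z_{k-1}$ under the quotient map $G \to Q$, the group $\ol G = G/N \cong Q/Z_{k-1}$ is again a finite $p$-group; the image $\ol x$ is nontrivial because $\tilde x \notin Z_{k-1}$, and it lies in $Z_k/Z_{k-1} = Z(Q/Z_{k-1})$, so $\ol x$ is central, exactly as required.

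The one point demanding genuine group theory—and the step I would flag as the conceptual crux—is the passage through the upper central series: one must use that every nontrivial finite $p$-group has nontrivial centre, so that the upper central series strictly ascends until it reaches all of $Q$. This is what guarantees that $\tilde x$ eventually lands in some $Z_k$ and hence that the index $k$ is well defined. Everything else is routine bookkeeping about preimages of normal subgroups under $G \to Q$, so I expect the proof to be short once the centralising mechanism is in place.
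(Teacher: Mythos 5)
Your proof is correct and follows essentially the same route as the paper: first use residual finite-$p$-ness to pass to a finite $p$-group quotient in which $x$ survives, then exploit nilpotence of finite $p$-groups by cutting the central series at the point where the image of $x$ becomes central. The only (immaterial) difference is that you work with the upper central series, quotienting by the preimage of $Z_{k-1}$ for $k$ minimal with $\tilde x \in Z_k$, whereas the paper uses the lower central series, taking $N = \gamma_{k+1}$ for $k$ maximal with $x \in \gamma_k$.
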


\begin{proof}
  Since $G$ is residually finite-$p$, we may assume that $G$ is a
  finite $p$-group.  Let $\gamma_k(G)$ be the smallest term in the
  lower central series of $G$ (i.e., $k$ maximal) such that
  $x \in \gamma_k(G)$, and take $N = \gamma_{k+1}(G)$.
\end{proof}

\begin{proof}[Proof of Theorem~\ref{thm:main}]
  Since $G$ and $H$ are residually finite-$p$ groups,
  Lemma~\ref{lem:central-quotient} implies that for any
  $(g,h) \in G \times H$ conditions (a), (b) and (c) in
  Proposition~\ref{pro:hinreichendes-Kriterium} are satisfied for
  $m=n=p$.  Thus $G \times H$ has the Magnus property.
\end{proof}

\begin{proof}[Proof of Corollary~\ref{cor:direct-sum}]
  Magnus' classical result~\cite{Ma30} on the Magnus property for free
  groups and an induction based on Theorem~\ref{thm:main} show that
  any direct product of finitely many free groups has the Magnus
  property.
  
  In an arbitrary direct product $G = \mathrm{Dr}_{i\in I} F_i$ any
  two elements $x,y \in G$ are contained in some normal subgroup
  $N \trianglelefteq G$ which is a direct product of a finite subset
  of the factors.  If $x$ and $y$ have the same normal closure,
  then, by the case for finitely many factors, $x$ is conjugate to $y$
  or $y^{-1}$, even within~$N$.
\end{proof}


\section{Crystallographic groups}

\subsection{} First we consider the group
\begin{equation} \label{equ:Passman-group} G = \langle x,y \mid
    x^{-1}y^2x=y^{-2}, y^{-1}x^2y=x^{-2} \rangle.
\end{equation}
Setting $z=xy$, one can easily check that the relations
\begin{align*}
  x^{-1}z^2x&=z^{-2} & z^{-1}x^2z&=x^{-2}\\
  y^{-1}z^2y&=z^{-2} & z^{-1}y^2z&=y^{-2}
\end{align*}
hold, so there are automorphisms sending $x$ and $y$
to any two out of $x$, $y$ and $z$.

Furthermore, the subgroup $A = \langle x^2,y^2,z^2 \rangle \leq G$ is free
abelian of rank $3$, with the indicated basis, has index
$\lvert G : A \rvert = 4$ and $G/A \cong C_2\times C_2$.  We note in
passing that $G/[G,G] \cong C_4 \times C_4$ and
\[
[x,y] = x^{-1} y^{-1} x y = x^{-1} y^{-1} z^2 y^{-1} x^{-1} = x^{-2}
y^2 z^2. 
\]
All this is consistent with the faithful matrix
representation of $G$ in $\mathrm{GL}_4(\mathbb{Q})$ given by 
\[
X =
\left[\begin{smallmatrix}
1 &  \nicefrac{-1}{2} & 0 & 0 \\
 & 1 & & \\
 & & -1 & \\
 & & & -1
\end{smallmatrix}\right]
\qquad \text{and} \qquad Y =
\left[\begin{smallmatrix}
1 & 0 & \nicefrac{1}{2} & \nicefrac{1}{2} \\
 & -1 & & \\
 & & 1 & \\
 & & & -1
\end{smallmatrix}\right].
\]
For instance, one calculates easily,
\[
X^2 =
\left[\begin{smallmatrix}
1 & -1 & 0 & 0\\
 & 1 & & \\
 & & 1 & \\
 & & & 1
\end{smallmatrix}\right],
\quad Y^2 =
\left[\begin{smallmatrix}
1 & 0 & 1 & 0\\
 & 1 & & \\
 & & 1 & \\
 & & & 1
\end{smallmatrix}\right],
\quad (XY)^2 =
\left[\begin{smallmatrix}
1 & 0 & 0 & 1\\
 & 1 & & \\
 & & 1 & \\
 & & & 1
\end{smallmatrix}\right],
\quad [X,Y] =
\left[\begin{smallmatrix}
1 & 1 & 1 & 1\\
 & 1 & & \\
 & & 1 & \\
 & & & 1
\end{smallmatrix}\right].
\] 
In the second part of the current section we explain how $G$ can
be regarded as a member $G_2$ of an infinte sequence of groups $G_p$, one
for each prime~$p$.

\begin{prop} \label{pro:G-group}
  The group $G$ defined in \eqref{equ:Passman-group}
  has the Magnus property.
\end{prop}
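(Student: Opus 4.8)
The plan is to exploit the crystallographic structure. The translation lattice $A=\langle x^2,y^2,z^2\rangle\cong\mathbb{Z}^3$ is characteristic, $G/A\cong C_2\times C_2$, and conjugation by $x,y,z$ acts on $A$ through the commuting sign involutions $\sigma_x=\mathrm{diag}(1,-1,-1)$, $\sigma_y=\mathrm{diag}(-1,1,-1)$, $\sigma_z=\mathrm{diag}(-1,-1,1)$ with respect to the basis $(x^2,y^2,z^2)$. Given $a,b\in G$ with $\langle a\rangle^G=\langle b\rangle^G$, I would first project to $G/A$: the images generate the same (cyclic) subgroup of $C_2\times C_2$, so either $a,b\in A$, or $a,b$ lie in a common non-trivial coset. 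Using the automorphisms of $G$ permuting $x,y,z$, the latter situation reduces to $a,b\in xA$. The proof thus splits into the translation case and the coset case.

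In the translation case $a,b\in A$, conjugation by any $g\in G$ induces only the holonomy action, so the conjugacy class of $a=(\alpha,\beta,\gamma)$ is its orbit under $\{1,\sigma_x,\sigma_y,\sigma_z\}$, namely the sign changes with an even number of minus signs, while the class of $a^{-1}$ consists of the odd ones. Hence, writing $b=(\alpha',\beta',\gamma')$, $a$ is conjugate to $b$ or $b^{-1}$ exactly when $|\alpha|=|\alpha'|$, $|\beta|=|\beta'|$, $|\gamma|=|\gamma'|$. On the other hand $\langle a\rangle^G$ is generated by this orbit, and projecting it to each coordinate axis returns $|\alpha|\mathbb{Z}$, $|\beta|\mathbb{Z}$, $|\gamma|\mathbb{Z}$; so $\langle a\rangle^G=\langle b\rangle^G$ forces the three absolute values to agree, and this case is complete.

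For the coset case write $a=xt$ with $t=(t_1,t_2,t_3)\in A$. A short computation gives $a^2=(x^2)^{2t_1+1}$, so $a^2$ lies on the $\sigma_x$-fixed axis $\langle x^2\rangle$ with odd coefficient $c_a:=2t_1+1$. Working out conjugation by $x$, by $y$ (using $y^{-1}xy=x[x,y]=x\cdot x^{-2}y^2z^2$), by $z$, and by $A$, I would show that the conjugacy class of $a$ inside $xA$ is determined precisely by the pair $(\,|c_a|,\ t_2+t_3\bmod 2\,)$, and that $a^{-1}$ carries the data $(-c_a,\ t_2+t_3\bmod 2)$. Consequently, with $b=xs$, the element $a$ is conjugate to $b$ or to $b^{-1}$ if and only if $|c_a|=|c_b|$ and $t_2+t_3\equiv s_2+s_3\pmod 2$.

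It then remains to recover these two invariants from the normal closure, and this is where I would pass to two well-chosen quotients. The abelianization $G^{\mathrm{ab}}\cong C_4\times C_4$ sends $a$ to $\ol x^{\,1+2(t_1+t_3)}\ol y^{\,2(t_2+t_3)}$, an element of order $4$; since $\langle a\rangle^G=\langle b\rangle^G$ forces $\ol b=\ol a^{\pm 1}$, and both generators of $\langle\ol a\rangle$ share the $\ol y$-exponent $2(t_2+t_3)\bmod 4$, the parity $t_2+t_3\bmod 2$ is an invariant of the normal closure. To capture $|c_a|$ I would pass instead to $\ol G=G/\langle y^2,z^2\rangle$: the subgroup $\langle y^2,z^2\rangle$ is holonomy-invariant, hence normal, and the relations collapse to $\ol G\cong D_\infty$ with $a\mapsto\ol x^{\,c_a}$. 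The normal closure of $\ol x^{\,c_a}$ in $D_\infty$ is the index-$|c_a|$ subgroup of $\langle\ol x\rangle$, so $|c_a|$ is likewise determined by $\langle a\rangle^G$. Matching both invariants yields $a\sim b$ or $a\sim b^{-1}$, completing the argument. I expect the main obstacle to be the coset case: pinning down the conjugacy classes in $xA$ finely enough to see that $(\,|c_a|,\ t_2+t_3\bmod 2\,)$ is a complete invariant up to inversion, and recognising the dihedral quotient as the device that exposes $|c_a|$ — the translation case and the reduction to a single coset are routine by comparison.
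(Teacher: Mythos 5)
Your proposal is correct, and its skeleton matches the paper's proof: the same reduction via $G/A$ and the automorphisms permuting $x,y,z$ to the two cases $a,b\in A$ and $a,b\in xA$, and in the coset case the same two quotients do all the work, namely $G/\langle y^2,z^2\rangle\cong D_\infty$ (recovering $|c_a|$, which is the paper's normalisation $\alpha'\in\{\alpha,-1-\alpha\}$) and the abelianisation $G/[G,G]\cong C_4\times C_4$ (recovering the parity $t_2+t_3\bmod 2$; the paper uses it to show that the remaining coordinates, once reduced, can only differ in both places at once). The genuine differences are organisational, and mildly in your favour. In the translation case your projection argument --- equal normal closures have equal images under the three coordinate projections of $A\cong\mathbb{Z}^3$, so $|\alpha|=|\alpha'|$, $|\beta|=|\beta'|$, $|\gamma|=|\gamma'|$, and every sign-change of $(\alpha,\beta,\gamma)$ is conjugate to $a$ or to $a^{-1}$ --- is cleaner and more uniform than the paper's enumeration according to the number of non-zero coordinates with an explicit generating set for each normal closure. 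In the coset case you classify conjugacy in $xA$ completely and then match invariants, whereas the paper normalises $h$ step by step and finishes with the explicit identity $g^y=h^{-1}$; these amount to the same computations.

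One wording slip should be fixed: the intermediate claim that the conjugacy class of $a$ in $xA$ ``is determined precisely by the pair $(|c_a|,\,t_2+t_3\bmod 2)$'' cannot be literally what you prove, since $a^{-1}$ carries the same pair, yet no element of $xA$ is conjugate to its inverse (for instance $x$ is not conjugate to $x^{-1}$, as $x$ has order $4$ modulo $[G,G]$ --- this very failure is what Proposition~\ref{pro:G-H-product} exploits). What the conjugation computations actually yield is that the class of $a=xt$ consists of those $xs$ whose data equals $\bigl(c_a,(t_2,t_3)\bmod 2\bigr)$ or $\bigl(-c_a,(t_2+1,t_3+1)\bmod 2\bigr)$; the pair $\bigl(|c_a|,\,t_2+t_3\bmod 2\bigr)$ determines only the \emph{union} of the classes of $a$ and $a^{-1}$. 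That corrected statement is exactly your ``consequently'' characterisation, which is the one your argument actually uses, so this is a local imprecision rather than a gap.
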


\begin{proof}
  We fix $g,h \in G$ with $\langle g \rangle^G = \langle h \rangle^G$
  and show that $h$ is conjugate to $g$ or to~$g^{-1}$.  In some of
  our calculations it helps to think of
  $A = \langle x^2 \rangle \times \langle y^2 \rangle \times \langle
  z^2 \rangle \cong \mathbb{Z}^3$
  as a direct sum of three $1$-dimensional modules for
  $\langle \ol x, \ol y \rangle \cong C_2 \times C_2$; compare the
  construction of the more general family of groups $G_p$ given below.  We
  distinguish two cases.

  \medskip

  \noindent \emph{Case 1}: $g \in A$.  In this case
  $g = x^{2\alpha} y^{2\beta} z^{2\gamma}$ with
  $\alpha, \beta, \gamma \in \mathbb{Z}$.  If $(\alpha,\beta,\gamma)$
  has at most one non-zero coordinate, $\beta = \gamma = 0$ say, then
  $\langle h \rangle^G = \langle g \rangle^G = \langle x^{2\alpha}
  \rangle$
  implies $h \in \{ x^{2\alpha}, x^{-2\alpha} \} = \{g,g^{-1}\}$.

  Now suppose that $(\alpha,\beta,\gamma)$ has precisely two non-zero
  coordinates, $\alpha, \beta \neq 0$ and $\gamma = 0$ say.  Then the
  relations $[g,x] = y^{-4\beta}$ and $[g,y] = x^{-4\alpha}$ show that
  \[
  \langle h \rangle^G = \langle g \rangle^G = \langle x^{2\alpha}
  y^{2\beta}, x^{4\alpha}, y^{4\beta} \rangle.
  \]
  This implies
  $h \in \{ x^{2\alpha} y^{2\beta}, x^{2\alpha} y^{-2\beta},
  x^{-2\alpha} y^{2\beta}, x^{-2\alpha} y^{-2\beta}\} =
  \{g,g^x,g^y,g^{-1}\}$.

  Finally, suppose that all coordinates of $(\alpha,\beta,\gamma)$ are
  non-zero. Then the relations
  \[
  [g,x] = y^{-4\beta} z^{-4\gamma}, \qquad [g,y] =
  x^{-4\alpha} z^{-4\gamma}, \qquad g^2 = x^{4\alpha} y^{4\beta}
  z^{4\gamma}
  \]
  show that
  \[
  \langle h \rangle^G = \langle g \rangle^G = \langle x^{2\alpha}
  y^{2\beta} z^{2\gamma}, x^{4\alpha}, y^{4\beta},z^{4\gamma} \rangle.
  \]
  Consequently, $h$ is of the form
  $x^{\pm 2\alpha} y^{\pm 2\beta} z^{\pm 2 \gamma}$, and 
  $h \in \{ g^{\pm 1}, (g^{\pm 1})^x, (g^{\pm 1})^y, (g^{\pm 1})^{xy}
  \}$.

 \medskip

 \noindent \emph{Case 2}: $g \not \in A$.  Since $\mathrm{Aut}(G)$
 acts transitively on the non-trivial cosets of~$A$, we may assume
 without loss of generality that $g \in x A$, that is
 $g = x.x^{2\alpha} y^{2\beta} z^{2\gamma}$ with
 $\alpha, \beta, \gamma \in \mathbb{Z}$.  Clearly,
 $\langle h \rangle A = \langle g \rangle A$ implies
 $h = x.x^{2\alpha'} y^{2\beta'} z^{2\gamma'}$ for
 $\alpha', \beta', \gamma' \in \mathbb{Z}$.

 First consider the quotient
 $Q = G / \langle y^2, z^2 \rangle \cong D_\infty$, an infinite
 dihedral group.  Indeed, denoting the images of $x,y,\ldots$ in $Q$
 by $\tilde x, \tilde y, \ldots$, we observe that
 $Q = \langle \tilde y \rangle \ltimes \langle \tilde x \rangle$,
 where $\tilde y^2 = 1$, $\tilde x$ has infinite order and
 $\tilde x^{\tilde y} = \tilde x^{-1}$.  Note that
 $\langle \tilde h \rangle^Q = \langle \tilde g \rangle^Q = \langle
 \tilde x^{1+2\alpha} \rangle$
 implies $1+2\alpha' = \pm (1+2\alpha)$, hence $\alpha' = \alpha$ or
 $\alpha' = -1-\alpha$.  Replacing $h$ by $h^{-1}$, if necessary,
 we may suppose that $\alpha' = \alpha$.

  Conjugating $g$ by $a =  y^{2\lambda} z^{2\mu} \in A$, we obtain
  \[
  g^a = g (a^{-1})^g a = x.\big( x^{2\alpha} y^{2\beta} z^{2\gamma}
  \cdot (y^{-2\lambda} z^{-2\mu})^x \cdot
  y^{2\lambda} z^{2\mu} \big) 
  = x.(x^{2\alpha} y^{2\beta + 4\lambda} z^{2\gamma + 4\mu});
  \]
  we may therefore assume that $\beta, \gamma  \in \{0,1\}$ and,
  likewise, $\beta', \gamma' \in \{0,1\}$.

  Suppose that $(\beta,\gamma) \neq (\beta',\gamma')$.  The cyclic
  group $\langle g \rangle^G [G,G]/[G,G] \cong C_4$ contains
  \[
  g^2 \equiv (x^{1+2(\alpha+\gamma)} y^{2(\beta+\gamma)})^2 \equiv x^2
  \quad \text{and} \quad h^{-1}g \equiv x^{2(\gamma-\gamma')}
  y^{2(\beta-\beta'+\gamma-\gamma')} \quad \text{modulo~$[G,G]$.}
  \] 
 This implies that $\beta \neq \beta'$ and $\gamma
  \neq \gamma'$.  Swapping $g$ and $h$, if necessary, we deduce that
  \[
  g = x. x^{2\alpha},  \,  h = x. (x^{2\alpha} y^2 z^2) \qquad \text{or}
  \qquad g = x.(x^{2\alpha} z^2), \, h = x.(x^{2\alpha} y^2),
  \] 
  and a short calculation shows that $g^y = h^{-1}$.  For instance, in the first case 
  \[
  g^y = x^{-1}  y^2 z^2 x^{-2\alpha} = \big( x.(x^{2\alpha} y^2 z^2)
  \big)^{-1} = h^{-1}. \qedhere
  \] 
\end{proof}


\subsection{} The group defined in \eqref{equ:Passman-group}
allows for various generalisations; e.g., see~\cite{HiSa86,GuSi99}.
We are interested in one such variant, a family of centreless
torsion-free groups $G_p$ with $G_p \cong (C_p \times
C_p).\mathbb{Z}^{p^2-1}$, where $p$ denotes an arbitrary
prime.  In the end, we will 
need $H = G_3 \cong (C_3 \times C_3).\mathbb{Z}^8$, which we introduce
in advance by the following presentation:
\begin{equation} \label{equ:generalised-Passman} 
  \begin{split}
    H =& \langle u, v, e_\infty, \hat e_\infty, e_0, \hat e_0, e_1,
    \hat e_1, e_2, \hat e_2\mid\\
    &\qquad[a,b] \quad\text{for } a, b\in \{e_\infty, \hat e_\infty,
    e_0, \hat e_0, e_1, \hat e_1, e_2, \hat e_2\}, \\
    &\qquad a^z=c_{a,z}\quad\text{for } a\in
    \{e_\infty, \hat e_\infty, e_0, \hat e_0, e_1, \hat e_1, e_2,
    \hat e_2\}, z\in\{u,v\}, \\
   &\qquad u^3=e_\infty^{\, -2} \hat e_\infty^{\, -1}, \quad v^3 =
   e_0^{\, 2} \hat e_0, \quad  [u,v] = e_\infty e_0 e_1 e_2
    \rangle
  \end{split}
\end{equation}
where the words $c_{a,z}$ are given as in the following table:
\[
{\setlength\arraycolsep{10pt}
\begin{array}{c|cccccccc}
  &\, e_\infty\, & \hat e_\infty & e_0 & \hat e_0 & e_1 & \hat e_1 &
                                                                     e_2 & \hat e_2 \\[1pt]
  \hline \\[-12pt]
  u & e_\infty & \hat e_\infty & \hat e_0  &  e_0^{\,-1} \hat e_0^{\,-1} & \hat e_1
                                                        & e_1^{\,-1} \hat e_1^{\,-1} & \hat e_2 &
                                                                                                  e_2^{\,-1}
                                                                                                  \hat
                                                                                                  e_2^{\,-1}\\ 
  v & \hat e_\infty & e_\infty^{\,-1} \hat e_\infty^{\,-1} & e_0 &
                                                                   \hat
                                                                   e_0
                                                  & \hat e_1 &
                                                               e_1^{\,-1} \hat e_1^{\,-1} 
                                                                   &\,
                                                                     e_2^{\,-1}
                                                                     \hat e_2^{\,-1} & e_2 
\end{array}}
\]

By the first set of relations,
$B = \langle e_\infty, \hat e_\infty, e_0, \hat e_0, e_1, \hat e_1,
e_2, \hat e_2 \rangle$
is an abelian subgroup of~$H$.  The second set of relations shows that
$B \trianglelefteq H$, and from the third set of relations, we see
that $H/B = \langle \ol u,\ol v \rangle \cong C_3\times C_3$.

In order to motivate the presentation~\eqref{equ:generalised-Passman}
and to justify further structural properties of~$H$, we now construct,
more generally, the group $G = G_p$ for an arbitrary prime~$p$.
Consider an elementary abelian group
$\langle \ol u, \ol v \rangle \cong C_p \times C_p$, and fix a complex
primitive $p$th root of unity $\zeta$, satisfying the equation
$\sum_{i=0}^{p-1} \zeta^i = 0$.  We use the ring of integers
$\mathbb{Z}[\zeta] = \mathbb{Z} + \mathbb{Z} \zeta + \ldots +
\mathbb{Z} \zeta^{p-2}$
to describe explicitly a collection of $(p-1)$-dimensional
indecomposable $\mathbb{Z} \langle \ol u, \ol v \rangle$-lattices,
where the action of $\langle \ol u, \ol v \rangle$ factors through a
group of order~$p$; cf.\ \cite[(34.31)]{CuRe81}.  Concretely, we
consider $p+1$ such lattices $M_\infty, M_0, M_1, \ldots, M_{p-1}$,
each modeled on $\mathbb{Z}[\zeta]$ with the actions of $\ol u$ and
$\ol v$ specified by the columns of the following table.

\begin{center}
  \begin{tabular}{c| p{1.5cm} p{1.5cm} p{1.5cm} p{1.5cm}  p{1.5cm} p{1.5cm}}
    & $M_\infty$ & $M_0$ & $M_1$ & $M_2$ & $\cdots$ & $M_{p-1}$ \\
    \hline\hline
    \multicolumn{7}{c}{action on $M_i = \mathbb{Z}[\zeta]$ via
    multiplication by} \\ 
    \hline
    $\ol u$ & $1$ & $\zeta$ & $\zeta$ & $\zeta$ & $\cdots$ & $\zeta$ \\
    $\ol v$ & $\zeta$ & $1$ & $\zeta$ & $\zeta^2 $ & $\cdots$ & $\zeta^{p-1}$
  \end{tabular}
\end{center}

Here the index set $I = \{\infty\} \cup \{ i \mid 0 \le i \le p-1 \}$
parameterises naturally the various kernels of the actions of
$\langle \ol u, \ol v \rangle$ and can be thought of as the projective
line over $\mathbb{Z}/p\mathbb{Z}$.  We build $G$ as a non-split
extension $\langle \ol u, \ol v \rangle.B$, where
$B = B_p = \bigoplus_{i \in I} M_i \trianglelefteq G$ is isomorphic to
$\mathbb{Z}^{p^2-1}$ as an additive group.  For $i \in I$, we denote
by $e_i$ the generator $1$ of $M_i$.  In the special case $p=3$, the
additional generators $\hat e_i$ in the
presentation~\eqref{equ:generalised-Passman} correspond to the
elements $\zeta \in M_i$, for $i \in \{\infty,0,1,2\}$, and the
relations of the form $a^z=c_{a,z}$ express the action of $u,v$ on $B$
via~$\ol u, \ol v$.  Using the module-based notation and writing
$\pi = \zeta -1$, the general definition of $G$ is completed by
specifying for the generators $u,v$ and $e_i$, $i \in I$, the
additional relations:
\[
u^p = p \pi^{-1} e_\infty, \qquad  v^p  = -p \pi^{-1} e_0 \qquad
\text{and} \qquad
[u,v] = \textstyle\sum_{i \in I} e_i.
\] 

For concreteness, it is readily verified that these relations and the actions
described above hold for the following matrices, thus yielding a
faithful matrix representation of $G$
in $\mathrm{GL}_{p+2}(\mathbb{Q}(\zeta))$:
\[
U =
\left[\begin{smallmatrix}
1 & \nicefrac{1}{\pi}& 0 & 0 & \cdots & 0 \\ 
 & 1 & & & \\
 & & \zeta & & \\
 & & & \zeta & & \\
 & & & & \cdots & \\
 & & & & & \zeta
\end{smallmatrix}\right],
\quad V =
\left[\begin{smallmatrix}
  1 & 0 & \nicefrac{-1}{\pi} & \nicefrac{-1}{\pi}
       & \cdots & \nicefrac{-1}{\pi} \\
  & \zeta & & & \\
  & & 1 & & \\
  & & & \zeta & & \\
  & & & & \cdots & \\
  & & & & & \zeta^{p-1}
\end{smallmatrix}\right]
\]
and
\[
E_\infty =
\left[\begin{smallmatrix}
1 & 1 & 0 & 0 & \cdots & 0 \\ 
 & 1 & & & \\
 & & 1 & & \\
 & & & 1 & & \\
 & & & & \cdots & \\
 & & & & & 1
\end{smallmatrix}\right],
\quad
E_0 =
\left[\begin{smallmatrix}
1 & 0 & 1 & 0 & \cdots & 0 \\ 
 & 1 & & & \\
 & & 1 & & \\
 & & & 1 & & \\
 & & & & \cdots & \\
 & & & & & 1
\end{smallmatrix}\right],
\quad
\ldots,
\quad
E_{p-1} =
\left[\begin{smallmatrix}
1 & 0 & 0 & \cdots & 0 & 1 \\ 
 & 1 & & & \\
 & & 1 & & \\
 & & & \cdots & & \\
 & & & & 1 & \\
 & & & & & 1
\end{smallmatrix}\right].
\]
It is easily checked that $G$ is centreless and torsion-free.
Furthermore, $B$ is the unique self-centralising finite-index
subgroup, and so, in particular, characteristic in~$G$.  

Subject to the usual conventions $\infty + 1 = \infty$,
$\infty^{-1} = 0$ etc., the operations $i \mapsto i+1$ and
$i \mapsto 1/i$ modulo $p$ induce bijections $I \to I$.  Representing
the elements of $B$ as linear combinations
$\sum_{i\in I} b_i(\zeta) \, e_i$, with polynomials
$b_i \in \mathbb Z[t]$ for $i \in I$, we can specify
two automorphisms $\sigma,\tau \in \mathrm{Aut}(G)$ by
\begin{align*}
  u \sigma = u, \quad v \sigma = uv, \quad
    & \left( {\textstyle\sum_{i\in I} b_i(\zeta) \, e_i} \right) \sigma
    = {\textstyle\sum_{i\in I} b_{i+1}(\zeta) \,  e_i}, \\ 
   u \tau =v, \quad v \tau = u,\phantom{v} \quad
    & \left( {\textstyle\sum_{i\in I} b_i(\zeta) \, e_i} \right ) \tau =
    -b_0(\zeta) \,e_{\infty} - b_\infty(\zeta) \, e_0 -
      {\textstyle\sum_{i=1}^{p-1} b_{1/i}(\zeta^i) \, e_i}.
\end{align*}
The induced automorphisms on
$G/B = \langle\ol u,\ol v\rangle\cong C_p\times C_p$ correspond to
$\left(\begin{smallmatrix} 1 & 0 \\ 1 & 1\end{smallmatrix}\right),
\left(\begin{smallmatrix} 0 & 1 \\ 1 & 0 \end{smallmatrix}\right) \in
\mathrm{GL}(2,p)$.
In particular, $\mathrm{Aut}(G)$ permutes transitively the non-trivial
cosets $gB$ and operates $2$-transitively on $\{ M_i \mid i \in I \}$.

We remark in passing that, for $p>2$, the group $G = G_p$ is not
$2$-generated and thus not isomorphic to the universal group $K(p)$
studied in~\cite{GuSi99}.  Indeed, for $p>2$ we observe that
$G/[G,G] \cong C_p^{\, p+2}$, because 
\[
[G,G] = \left\{ {\textstyle\sum_{i \in I} b_i(\zeta) \, e_i} \in B \mid b_\infty(1)
  = b_0(1) = \ldots = b_{p-1}(1) \right\}.
\]

Recall that $G_2$ has the Magnus property by
Proposition~\ref{pro:G-group}.  In contrast, for $p>3$ the group
$G = G_p$ does not have the Magnus property.  Indeed, for any
$\alpha \in \mathbb Z[\zeta]$ the normal closure of
$\alpha e_0 \in M_0 \subseteq G$ in $G$ is simply the
$\mathbb Z\langle\ol u,\ol v\rangle$-submodule of $M_0$ generated
by~$\alpha e_0$.  For $p>3$, the group of units
$\mathbb{Z}[\zeta]^\times$ has torsion-free rank $(p-3)/2 \ge 1$ by
Dirichlet's theorem, so there are infinitely many elements
$\epsilon e_0$, $\epsilon \in \mathbb{Z}[\zeta]^\times$, with normal closure
$\langle \epsilon e_0 \rangle^G = M_0$.  However, each
element of $B$ has at most $p^2$ conjugates in~$G$, so these cannot
all be conjugate or inverse-conjugate to one another.

Therefore we restrict our focus to the case $p=3$ and consider the
group $H = G_3$.  As customary, we denote by $\omega$, in place of
$\zeta$, a complex primitive third root of unity.  The ring of
Eisenstein integers $\mathbb{Z}[\omega]$ has the finite group of units
$\mathbb{Z}[\omega]^\times = \{ \pm 1, \pm \omega, \pm \omega^2 \}$.
We continue to write $\pi = \omega-1$ and note that now
$\pi^2 \mathbb{Z}[\omega] = 3 \mathbb{Z}[\omega]$.

\begin{prop} \label{pro:H-group} The group $H$ defined in
  \eqref{equ:generalised-Passman} has the Magnus property.
\end{prop}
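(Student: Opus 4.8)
The plan is to mirror the proof of Proposition~\ref{pro:G-group}: I fix $g,h\in H$ with $\langle g\rangle^H=\langle h\rangle^H$ and show that $h$ is conjugate to $g$ or to $g^{-1}$, splitting into the two cases $g\in B$ and $g\notin B$. Throughout I identify $B=\bigoplus_{i\in I}M_i$ with $\bigoplus_{i\in I}\mathbb{Z}[\omega]$, where $I=\{\infty,0,1,2\}$, and I write $\rho_i\colon R\to\mathbb{Z}[\omega]$ for the ring homomorphism describing the action of $R=\mathbb{Z}[\langle\ol u,\ol v\rangle]$ on $M_i$; thus $\rho_\infty,\rho_0,\rho_1,\rho_2$ send $(\ol u,\ol v)$ to $(1,\omega),(\omega,1),(\omega,\omega),(\omega,\omega^2)$ respectively. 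The augmentation of $R$ is denoted $\epsilon$.

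\emph{Case 1} ($g\in B$). Since $B$ is abelian and normal, conjugation by $H$ factors through $\langle\ol u,\ol v\rangle$, so $\langle g\rangle^H=Rg$ is the $R$-submodule of $B$ generated by $g$, and the $H$-conjugates of $g$ and of $g^{-1}$ are precisely the elements $w\cdot g$ and $-w\cdot g$ with $w\in\langle\ol u,\ol v\rangle$. Write $g=\sum_i\alpha_ie_i$ and $h=\sum_i\beta_ie_i$. Projecting $Rg=Rh$ to each $M_i$ and using that $\rho_i$ is surjective gives $\alpha_i\mathbb{Z}[\omega]=\beta_i\mathbb{Z}[\omega]$, hence $\beta_i=u_i\alpha_i$ for units $u_i\in\mathbb{Z}[\omega]^\times=\{\pm1,\pm\omega,\pm\omega^2\}$, with equal supports $S=\operatorname{supp}(g)=\operatorname{supp}(h)$. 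Writing $h=rg$ with $r\in R$ and comparing $M_i$-components yields $u_i=\rho_i(r)$ for all $i\in S$. The crux is then a two-step $\pi$-adic analysis, with $\pi=\omega-1$. Since $\rho_i(\ol u),\rho_i(\ol v)\equiv1\pmod\pi$, every $\rho_i(r)$ is congruent to $\epsilon(r)$ modulo $\pi$; as units reduce to $\pm1$ modulo $\pi$, the $u_i$ share a common sign, and after replacing $h$ by $h^{-1}=-h$ if necessary I may assume $u_i=\omega^{\ell_i}$ for all $i\in S$. For the second step I use $\omega^{\ell}\equiv1+\ell\pi\pmod{\pi^2}$ together with the \emph{linearity} of the exponents $\ell_i(a,b)=(b,a,a+b,a+2b)$ in $(a,b)$: writing $r=\sum_{a,b}n_{ab}\,\ol u^{\,a}\ol v^{\,b}$ and reducing $\rho_i(r)$ modulo $\pi^2$ gives $\ell_i\equiv\sum_{a,b}n_{ab}\,\ell_i(a,b)\pmod3$, which for the two parameters $a_0=\sum n_{ab}a$ and $b_0=\sum n_{ab}b$ forces $(\ell_i)_{i\in S}$ to agree with the single group pattern $\ell_i(a_0,b_0)$. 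Hence $u_i=\rho_i(\ol u^{\,a_0}\ol v^{\,b_0})$ and $h=g^{w}$ with $w=u^{a_0}v^{b_0}$, so $h$ is conjugate to $g$ (or to $g^{-1}$). This is exactly where $p=3$ is used: the finiteness of $\mathbb{Z}[\omega]^\times$ and the fact that the four modules sit at the four points of $\mathbb{P}^1(\mathbb{F}_3)$ ensure that every admissible exponent pattern already lies in the two-dimensional space of group patterns.

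\emph{Case 2} ($g\notin B$). Using that $\mathrm{Aut}(H)$ permutes the non-trivial cosets $gB$ transitively, I reduce to $g=u\,b_0$ with $b_0=\sum_i\alpha_ie_i\in B$, and from $\langle g\rangle^HB=\langle h\rangle^HB$, after replacing $h$ by $h^{-1}$ if needed, likewise $h=u\,b_0'$. As in the $D_\infty$-step of Proposition~\ref{pro:G-group}, I pass to the quotient $H/(M_0\oplus M_1\oplus M_2)$: there one computes $g^3=(3\pi^{-1}+3\alpha_\infty)e_\infty\in M_\infty$, governed solely by the $M_\infty$-component $\alpha_\infty$, which pins down $\alpha_\infty$ up to the action on $M_\infty$ and lets me match the $M_\infty$-components of $g$ and $h$. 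Next I use that conjugation of $u\,b_0$ by $c\in B$ replaces $b_0$ by $b_0+(1-\ol u)c$; since $1-\ol u$ annihilates $M_\infty$ and acts as $-\pi$ on $M_0,M_1,M_2$, this normalizes the components $\alpha_0,\alpha_1,\alpha_2$ (and $\alpha_0',\alpha_1',\alpha_2'$) modulo $\pi$, reducing to finitely many possibilities. Finally I invoke the abelianization $H/[H,H]\cong C_3^{\,5}$, exactly as the relation $g^y=h^{-1}$ is exhibited at the end of Proposition~\ref{pro:G-group}, to control the residual discrepancy between $g$ and $h$ and to produce an explicit conjugating element.

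I expect \emph{Case 2} to be the main obstacle. Whereas Case 1 is settled cleanly by the two congruences modulo $\pi$ and $\pi^2$, Case 2 requires bookkeeping of the non-abelian extension: keeping track of how $g^3$, conjugation by $B$, and the abelianized invariants interact, enumerating the finitely many admissible $\pi$-adic patterns for $b_0'$, and checking that each is genuinely realised by conjugating $g$ or $g^{-1}$ rather than merely sharing its normal closure. Matching every surviving possibility with an explicit conjugator is where the bulk of the computation lies.
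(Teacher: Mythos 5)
Your Case~1 is correct, complete, and genuinely different from the paper's argument. The paper first reduces, via injectivity of coordinate-wise multiplication by nonzero elements of $\mathbb{Z}[\omega]$, to the case where all coordinates of $g$ lie in $\{0,1\}$, and then enumerates, support by support, the $\mathbb{Z}\langle\ol u,\ol v\rangle$-submodule generated by $g$ together with all of its generating elements. Your uniform two-step congruence argument replaces that enumeration: all your units $u_i=\rho_i(r)$ are congruent to the augmentation $\epsilon(r)$ modulo $\pi$, which forces a common sign; and then $\omega^{\ell}\equiv 1+\ell\pi \pmod{\pi^2}$ together with $3\in\pi^2\mathbb{Z}[\omega]$ (so $\epsilon(r)\equiv 1\pmod{\pi^2}$) and the linearity of the exponent patterns $(b,a,a+b,a+2b)$ forces $(\ell_i)_{i\in S}$ to be the pattern of a single group element $\ol u^{\,a_0}\ol v^{\,b_0}$. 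I checked the details; this is sound, arguably cleaner than the paper's case analysis, and it isolates exactly where $p=3$ enters: a unit congruent to $1$ modulo $\pi$ must be a power of $\omega$, which fails for $p>3$ --- precisely the reason $G_p$ with $p>3$ does not have the Magnus property.

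Case~2, however, is not finished as written: the step you defer as ``the bulk of the computation'' is the part that is missing, and your expectation of its shape is wrong, which is what creates the gap. After your reductions (matching $\alpha'_\infty=\alpha_\infty$ via the quotient $H/(M_0\oplus M_1\oplus M_2)$ and conjugation by a power of $v$, then normalising $\beta,\gamma,\delta,\beta',\gamma',\delta'\in\{0,1,2\}$ by conjugation with elements of $B$), there is no enumeration of surviving $\pi$-adic patterns and no hunt for explicit conjugators. Instead, one argues: if $(\beta,\gamma,\delta)\neq(\beta',\gamma',\delta')$, then $h^{-1}g=(0,\beta-\beta',\gamma-\gamma',\delta-\delta')$ lies in $\langle g\rangle^H$; modulo $[H,H]$ the normal closure $\langle g\rangle^H$ is the \emph{cyclic} group generated by the image of $g$, and this cyclic group meets the image of $B$ trivially, because the image of $g$ in $H/[H,H]\cong C_3^{\,5}$ has order $3$ while its nontrivial powers map to $\ol u,\ol u^2\neq 1$ in $H/B$ (note $[H,H]\subseteq B$). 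Hence $h^{-1}g\in[H,H]$; since $[H,H]\cap B$ consists of the tuples whose four coordinates are all congruent modulo $\pi$, this forces $\beta\equiv\beta'$, $\gamma\equiv\gamma'$, $\delta\equiv\delta'\pmod{\pi}$, hence equality, so $g=h$ outright. In particular, unlike the $p=2$ case of Proposition~\ref{pro:G-group}, no residual case analogous to $g^y=h^{-1}$ survives; modelling your endgame on that step is what left the hole. With this short cyclicity argument inserted in place of your final paragraph, your proof is complete.
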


\begin{proof}
  It is convenient to use the module-based notation for $H = G_3$,
  explained above.  We fix $g \in H$ and show that any $h \in H$ with
  $\langle g \rangle^H = \langle h \rangle^H$ is conjugate to $g$ or
  to~$g^{-1}$.  We distinguish two cases.

  \medskip

  \noindent \emph{Case 1}: $g \in B$.  In this case
  $g = \alpha e_\infty + \beta e_0 + \gamma e_1 + \delta e_2$ with
  $\alpha, \beta, \gamma, \delta \in \mathbb{Z}[\omega]$.  The normal
  closure $\langle g \rangle^H$ is equal to the
  $\mathbb{Z} \langle \ol u, \ol v \rangle$-submodule of
  $B = \bigoplus_{i\in I} M_i$ generated by~$g$.  For simplicity, we
  drop the $e_i$ from the notation and write
  $g = (\alpha,\beta,\gamma,\delta)$.  For any $i \in I$,
  multiplication by any non-zero element of $\mathbb{Z}[\omega]$
  yields an injective $\mathbb{Z} \langle \ol u, \ol v \rangle$-module
  endomorphism of~$M_i$.  `Scaling' each coordinate, we may thus
  suppose without loss of generality that
  $\alpha,\beta,\gamma,\delta \in \{0,1\}$.

  If $g = (0,0,0,0)$ there is nothing to show.  Suppose next that $g$
  has precisely one non-zero coordinate, $g = (1,0,0,0)$ say.
  Clearly, the $\mathbb{Z} \langle \ol u, \ol v \rangle$-module
  generated by $g$ is equal to the outer direct sum
  $\mathbb{Z}[\omega] \oplus \{0\} \oplus \{0\} \oplus \{0\}$.  The
  generators of this module are of the form $(\epsilon,0,0,0)$ with
  $\epsilon \in \{ \pm 1, \pm \omega, \pm \omega^2 \}$.  Clearly, each
  of these is equal to $\pm {\ol v}^{\, j} (1,0,0,0)$, that is
  $(g^{\pm 1})^{v^j}$, for suitable $j \in \{0,1,2\}$.

  Now suppose that $g$ has precisely two non-zero coordinates,
  $g = (1,1,0,0)$ say.  Then the
  $\mathbb{Z} \langle \ol u, \ol v \rangle$-module generated by $g$ is
  equal to
  \[
  \{ (0,0,0,0), (1,1,0,0), (2,2,0,0) \} + \big( \pi \mathbb{Z}[\omega]
  \oplus \pi \mathbb{Z}[\omega] \oplus \{0\} \oplus \{0\} \big).
  \]
  The generators of this module are of the form
  $(\epsilon_1,\epsilon_2,0,0)$ or $(-\epsilon_1,-\epsilon_2,0,0)$
  with $\epsilon_1, \epsilon_2 \in \{ 1, \omega, \omega^2 \}$.
  Clearly, each of these is equal to
  $\pm {\ol u}^{\, i} {\ol v}^{\, j} (1,1,0,0)$, that is
  $(g^{\pm 1})^{u^i v^j}$, for suitable
  $i,j \in \{0,1,2\}$.

  Now suppose that $g$ has precisely three non-zero coordinates,
  $g = (1,1,1,0)$ say.  Then the
  $\mathbb{Z} \langle \ol u, \ol v \rangle$-module generated by $g$ is
  equal to
  \[
  \mathbb{Z} (1,1,1,0) + \mathbb{Z} (0,\pi,\pi,0) + \mathbb{Z}
  (\pi,0,\pi,0) + \big( 3 \mathbb{Z}[\omega] \oplus 3
  \mathbb{Z}[\omega] \oplus 3 \mathbb{Z}[\omega] \oplus \{0\} \big).
  \]
  The generators of this module are of the form
  $(\epsilon_1,\epsilon_2,\epsilon_3,0)$ or
  $(-\epsilon_1,-\epsilon_2,-\epsilon_3,0)$ with
  $\epsilon_1, \epsilon_2, \epsilon_3 \in \{ 1, \omega, \omega^2 \}$,
  where $\epsilon_3$ is uniquely determined by
  $(\epsilon_1,\epsilon_2)$.  Again, each of these is equal to
  $\pm {\ol u}^{\, i} {\ol v}^{\, j} (1,1,1,0)$, that is
  $(g^{\pm 1})^{u^i v^j}$, for suitable $i,j \in \{0,1,2\}$.

  Finally, let $g=(1,1,1,1)$.  Then the
  $\mathbb{Z} \langle \ol u, \ol v \rangle$-module generated by $g$ is equal to
  \[
  \mathbb{Z} (1,1,1,1) + \mathbb{Z} (0,\pi,\pi,\pi) + \mathbb{Z}
  (\pi,0,\pi,-\pi) + \big( 3 \mathbb{Z}[\omega] \oplus 3
  \mathbb{Z}[\omega] \oplus 3 \mathbb{Z}[\omega] \oplus 3
  \mathbb{Z}[\omega] \big).
  \]
  The generators of this module are of the form
  $(\epsilon_1,\epsilon_2,\epsilon_3,\epsilon_4)$ or
  $(-\epsilon_1,-\epsilon_2,-\epsilon_3,-\epsilon_4)$ with
  $\epsilon_1, \epsilon_2, \epsilon_3, \epsilon_4 \in \{ 1, \omega,
  \omega^2 \}$,
  where $(\epsilon_3,\epsilon_4)$ is uniquely determined by
  $(\epsilon_1,\epsilon_2)$.  Again, each of these is equal to
  $\pm {\ol u}^{\, i} {\ol v}^{\, j} (1,1,1,1)$, that is
  $(g^{\pm 1})^{u^i v^j}$, for suitable
  $i,j \in \{0,1,2\}$.

 \medskip

 \noindent \emph{Case 2}: $g \not \in B$.  Since $\mathrm{Aut}(H)$
 acts transitively on the non-trivial cosets of~$B$, we may assume
 without loss of generality that $g \in u B$, that is
 $g = u.(\alpha,\beta,\gamma,\delta)$ with
 $\alpha, \beta, \gamma, \delta \in \mathbb{Z}[\omega]$, where we
 suppress $e_1,e_2,e_3,e_4$ from the notation as before.  Suppose that
 $h \in H$ has the same normal closure in $H$ as~$g$.  Then
 $\langle h \rangle B = \langle g \rangle B = \langle u \rangle B$
 shows that, replacing $h$ by $h^{-1}$ if necessary, we may
 suppose that $h = u.(\alpha',\beta',\gamma',\delta')$ for
 $\alpha', \beta', \gamma', \delta' \in \mathbb{Z}[\omega]$.

 First consider the quotient $Q = H / (M_0 \oplus M_1 \oplus M_2)$.
 We denote the image of $z \in H$ in $Q$ by $\tilde{z}$.  Observe that
 $\tilde{v}$ has order $3$ and that
 $Q = \langle \tilde{v} \rangle \ltimes \langle \tilde{u},
 [\tilde{u},\tilde{v}] \rangle$
 is naturally isomorphic to
 $\langle w \rangle \ltimes \pi^{-1} \mathbb{Z}[\omega]$, where
 $w$ has order $3$ and acts via multiplication by $\omega$ on the
 additive group $\pi^{-1} \mathbb{Z}[\omega]$, by virtue
 of $\tilde{v} \mapsto w$ and
 $\tilde{u} \mapsto \pi^{-1}$.  Note that the normal
 closure of $\tilde{g}$ corresponds to
 $(\pi^{-1} + \alpha) \mathbb{Z}[\omega]$.  Likewise the
 normal closure of $\tilde{h}$ corresponds to
 $(\pi^{-1} + \alpha') \mathbb{Z}[\omega]$.  This
 implies that
  \[
  \pi^{-1} + \alpha' = \epsilon \big( \pi^{-1} + \alpha \big) \qquad
  \text{for a suitable
    $\epsilon \in \{\pm 1, \pm \omega, \pm \omega^2\}$.}
  \]
  
  Since $\alpha, \alpha' \in \mathbb{Z}[\omega]$, we deduce that
  $\epsilon \in \{1,\omega,\omega^2\}$ so that there are at most three
  possible values for $\alpha'$, depending on $\alpha$.  Conjugating
  $h$ by $v^j$ for a suitable $j \in \{0,1,2\}$, we may thus assume
  that $\alpha' = \alpha$.

  Conjugating $g$ by $b = (0,\lambda,\mu,\nu) \in B$, we obtain
  \begin{multline*}
    g^b = g (b^{-1})^g b = u.\big( (\alpha, \beta, \gamma, \delta) +
    \ol u (0,-\lambda,-\mu,-\nu) +
    (0,\lambda,\mu,\nu) \big) ) \\
    = u.(\alpha, \beta - \pi \lambda, \gamma - \pi \mu, \delta - \pi \nu).
  \end{multline*}
  We may therefore assume that $\beta, \gamma, \delta \in \{0,1,2\}$ and,
  likewise, $\beta', \gamma', \delta' \in \{0,1,2\}$.

  For a contradiction, suppose that
  $(\beta,\gamma,\delta) \neq (\beta',\gamma',\delta')$.  Then the
  normal closure of $g$ in $H$ would contain
  \[
  g = u.(\alpha,\beta,\gamma,\delta) \qquad \text{and} \qquad h^{-1}g =
  (0,\beta-\beta',\gamma-\gamma',\delta-\delta').
  \] 
  But this would imply that $\langle g \rangle^H [H,H]/[H,H]$ was not cyclic, a
  contradiction.
\end{proof}


\begin{prop} \label{pro:G-H-product}
  The direct product $G \times H$ of the crystallographic groups $G$
  and $H$ defined in \eqref{equ:Passman-group} and \eqref{equ:generalised-Passman} 
  does not have the Magnus property.
\end{prop}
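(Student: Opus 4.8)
The plan is to apply Lemma~\ref{lem:i-ii-iii}. Since $G$ and $H$ have the Magnus property by Propositions~\ref{pro:G-group} and~\ref{pro:H-group}, the product $G\times H$ fails to have it precisely when there is a pair $(g,h)$ for which conditions (i), (ii) and (iii) all fail. So I would exhibit one explicit $g\in G$ and $h\in H$ with $g$ not conjugate to $g^{-1}$, with $h$ not conjugate to $h^{-1}$, and with $\langle(g,h)\rangle^{G\times H}=\langle(g,h^{-1})\rangle^{G\times H}$. The natural candidates are $g=x$ and $h=u$. That $x$ is not conjugate to $x^{-1}$ and $u$ is not conjugate to $u^{-1}$ follows at once from the abelianisations $G/[G,G]\cong C_4\times C_4$ and $H/[H,H]\cong C_3^{\,5}$, in which the images of $x$ and $u$ have orders $4$ and $3$ and therefore differ from their inverses; conjugation cannot alter the image in the abelianisation.

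The main work is to identify the two normal closures, and here the general mechanism underlying Proposition~\ref{pro:hinreichendes-Kriterium} does the job. Writing $K_g=\langle[g,w]\mid w\in G\rangle^G=[\langle g\rangle^G,G]$ and likewise $K_h$, I would first observe that for any $(g,h)$ the normal closure $N=\langle(g,h)\rangle^{G\times H}$ contains $K_g\times K_h$: conjugating $(g,h)$ by $(w,1)$ and multiplying by $(g,h)^{-1}$ gives $(g^wg^{-1},1)$, and since $g^wg^{-1}=[g,w]^{g^{-1}}$ these elements and their conjugates generate $K_g\times 1$, and symmetrically $1\times K_h\leq N$. As $\ol g$ and $\ol h$ are central in $G/K_g$ and $H/K_h$, the quotient $N/(K_g\times K_h)$ is the cyclic group $\langle(\ol g,\ol h)\rangle$ inside $\langle\ol g\rangle\times\langle\ol h\rangle$; hence $N$ is exactly the preimage of $\langle(\ol g,\ol h)\rangle$ under $G\times H\to(G/K_g)\times(H/K_h)$, and analogously for $(g,h^{-1})$. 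Thus the desired equality of normal closures is equivalent to $\langle(\ol g,\ol h)\rangle=\langle(\ol g,\ol h^{-1})\rangle$ inside $\langle\ol g\rangle\times\langle\ol h\rangle\cong C_m\times C_n$, where $m$ and $n$ are the orders of $\ol g$ and $\ol h$ in the cyclic quotients of conditions (b) and (c) of Proposition~\ref{pro:hinreichendes-Kriterium}.

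At this point I would compute the two cyclic quotients explicitly. For $g=x$ a direct calculation inside $A$, using that the holonomy $C_2\times C_2$ acts on the basis $x^2,y^2,z^2$ by sign changes, shows that $K_x\cap A$ has index $4$ in $A$ while $\langle x\rangle^G\cap A$ has index $2$, whence $\langle x\rangle^G/K_x\cong C_4$ and $m=4$. For $h=u$ the analogous computation in $B=\bigoplus_{i\in I}M_i$ shows that $\langle u\rangle^H/K_u$ is a finite $3$-group: the key point is that $\ol u,\ol v$ act on each nontrivial module $M_i\cong\mathbb Z[\omega]$ by multiplication by $\omega$, so the coinvariants $M_i/(\omega-1)M_i\cong\mathbb Z[\omega]/\pi\cong\mathbb F_3$ are finite, forcing $n$ to be a power of $3$. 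Consequently $\gcd(m,n)=1\in\{1,2\}$, and Lemma~\ref{lem:Cm-Cn} gives $\langle(\ol g,\ol h)\rangle=\langle(\ol g,\ol h^{-1})\rangle$; lifting back through the preimage description, $\langle(g,h)\rangle^{G\times H}=\langle(g,h^{-1})\rangle^{G\times H}$. Hence (i), (ii) and (iii) fail simultaneously, and Lemma~\ref{lem:i-ii-iii} yields that $G\times H$ does not have the Magnus property.

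I expect the finiteness of the cyclic quotients $\langle x\rangle^G/K_x$ and $\langle u\rangle^H/K_u$ to be the crux of the argument. It is exactly the step where the holonomy groups being $C_2\times C_2$ and $C_3\times C_3$ (acting on the respective translation lattices by nontrivial roots of unity, hence with finite $\mathbb F_p$-coinvariants) is used, and it is indispensable: were either quotient infinite cyclic, the element $\ol g$ or $\ol h$ would have infinite order, the two normal closures would be distinct, and the argument would collapse. It is the coprimality of a power of $2$ and a power of $3$ that makes $C_m\times C_n$ cyclic and forces the two normal closures to coincide.
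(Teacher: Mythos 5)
Your overall strategy is essentially the paper's own: the paper also applies Lemma~\ref{lem:i-ii-iii} to one explicit pair and exploits the coprimality of a cyclic quotient of order $4$ on the $G$-side with one of order $3$ on the $H$-side. The differences are that the paper takes $h=e_\infty\in B$ rather than $h=u$, and that it deduces the equality of the two normal closures by a subdirect-product argument inside $C_4\times C_3$, whereas you use a preimage description of $\langle(g,h)\rangle^{G\times H}$ over $K_g\times K_h$ together with Lemma~\ref{lem:Cm-Cn}; both formulations are valid, and your treatment of the $G$-side (giving $m=4$) and your reduction to $\langle(\ol g,\ol h)\rangle=\langle(\ol g,\ol h^{-1})\rangle$ are correct.

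However, the step you yourself single out as the crux --- that $n$, the order of $u$ modulo $K_u=\langle[u,w]\mid w\in H\rangle^H$, is a finite power of $3$ --- is justified by a false premise. You claim that $\ol u$ and $\ol v$ act on each $M_i$ by multiplication by $\omega$; by the defining table, $\ol u$ acts \emph{trivially} on $M_\infty$ (and $\ol v$ acts trivially on $M_0$). This matters exactly where it hurts: $u^3=3\pi^{-1}e_\infty=-\omega^2\pi\,e_\infty$ lies in $M_\infty$, and since $\ol u$ fixes $M_\infty$ pointwise, the commutators $[u,b]$ with $b\in B$ contribute nothing to $K_u\cap M_\infty$; so finiteness of the coinvariants of the $M_i$ does not by itself force any power of $u$ into $K_u$. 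The repair requires commutators of $u$ with elements outside $B$: one has $[u,v]=e_\infty+e_0+e_1+e_2\in K_u$, and since $K_u$ is normal, hence $\ol v$-stable, also $(\ol v-1)[u,v]=\pi e_\infty+\pi e_1+(\omega^2-1)e_2\in K_u$; as $\pi(M_1\oplus M_2)\subseteq[u,B]\subseteq K_u$ and $\omega^2-1$ is a unit times $\pi$, this gives $\pi e_\infty\in K_u$, hence $\pi M_\infty\subseteq K_u$ and $u^3\in K_u$, so in fact $n=3$ and your argument then goes through verbatim. Note that the paper's choice $h=e_\infty\in B$ sidesteps this subtlety entirely: there $\langle h\rangle^H=M_\infty$, and the normal closure of the commutators of $h$ equals $\pi M_\infty$ immediately, because $\ol v$ genuinely does act on $M_\infty$ by $\omega$.
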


\begin{proof}
  We consider $g = x \in G$ and $h = e_\infty = (1,0,0,0) \in B \leq H$.
  Since $g$ has order $4$ modulo $[G,G]$, we see that
  $g \not \equiv g^{-1}$ modulo $[G,G]$.  Consequently, $g$ and
  $g^{-1}$ are not conjugate in~$G$.  Similarly, $h$ generates a
  cyclic group of order $3$ modulo $[H,H]$, and thus $h$ and $h^{-1}$
  are not conjugate in~$H$.

  Observe that
  \[
  M = \langle [g,g'] \mid g' \in G \rangle^G = [G,G]
  \]
  is normal and of index $4$ in $\langle g \rangle^G$.  Suppressing
  the $e_i$ as before, we see that
  \[
  N = \langle [h,h'] \mid h' \in H \rangle^H =
  \pi \mathbb{Z}[\omega] \oplus \{0\} \oplus \{0\} \oplus
  \{0\}
  \]
  has index $3$ in
  $\langle h \rangle^H = \mathbb{Z}[\omega] \oplus \{0\} \oplus \{0\} \oplus
  \{0\}$.

  This implies that each of the groups $\langle (g,h) \rangle^{G \times H}$ and
  $\langle (g,h^{-1}) \rangle^{G \times H}$ contains $M \times N$ and,
  modulo $M \times N$, forms a subdirect product of
  \[
  (\langle g \rangle^G \times \langle h \rangle^H) / (M \times N) \cong
  (\langle g \rangle^G/M) \times (\langle h \rangle^H/N) \cong C_4
  \times C_3.
 \] 
 We deduce that
 \[
 \langle (g,h) \rangle^{G \times H} = \langle g \rangle^G \times
 \langle h \rangle^H = \langle (g,h^{-1}) \rangle^{G
    \times H}.
 \]
  According to Lemma~\ref{lem:i-ii-iii}, the group $G \times H$ does
  not have the Magnus property.
\end{proof}

Propositions~\ref{pro:G-group}, \ref{pro:H-group} and \ref{pro:G-H-product}
clearly imply Proposition~\ref{pro:not-preseverved-torsion-free} in
the introduction.

\medskip

\begin{acknow}
 We are grateful to Steffen Kionke for drawing our attention to the
 work of Hiller and Sah~\cite{HiSa86}.  
\end{acknow}



\end{document}